\documentclass{article}
\usepackage[english]{babel}
\usepackage{amsmath,amssymb,latexsym,alltt}

\usepackage[colorlinks=true,linkcolor=blue,citecolor=blue,urlcolor=black,pdfpagelabels=false]{hyperref}

\newcommand{\email}[1]{{\emph{Email:} \texttt{#1}}}

\usepackage{amsthm,thmtools}
\theoremstyle{plain}
  \declaretheorem[numberwithin=section]{theorem}
  \declaretheorem[numberlike=theorem]{corollary}
  
  \declaretheorem[numberlike=theorem]{lemma}

\theoremstyle{definition}

  \declaretheorem[numberlike=theorem]{remark}

\numberwithin{equation}{section}

\newcommand{\id}{d}
\newcommand{\Li}{\operatorname{Li}}
\newcommand{\pFq}[5]{\ensuremath{{}_{#1}F_{#2} \left( \genfrac{}{}{0pt}{}{#3}{#4} \bigg| {#5} \right)}}
\newcommand{\assign}{:=}

\begin{document}

\title{A triple integral analog of a multiple zeta value}

\author{Tewodros Amdeberhan
\thanks{Department of Mathematics, Tulane University, New Orleans, LA 70118, \email{tamdeber@tulane.edu}}
\and Victor H. Moll
\thanks{Department of Mathematics, Tulane University, New Orleans, LA 70118, \email{vhm@tulane.edu}}
\and Armin Straub
\thanks{Department of Mathematics and Statistics, University of South Alabama, 411 University Blvd N, Mobile, AL 36688, \email{straub@southalabama.edu}}
\and Christophe Vignat
\thanks{Department of Mathematics, Tulane University, New Orleans, LA 70118, \email{cvignat@tulane.edu}}}

\date{April 13, 2020}

\maketitle

{\hfill\sl Dedicated to Bruce Berndt on the occasion of his 80\textsuperscript{th} birthday}

\bigskip

\begin{abstract}
We establish the triple integral evaluation
\[ \int_{1}^{\infty} \int_{0}^{1} \int_{0}^{1} \frac{dz \, dy \, dx}{x(x+y)(x+y+z)}
  = \frac{5}{24} \zeta(3), \]
as well as the equivalent polylogarithmic double sum
\[ \sum_{k=1}^{\infty} \sum_{j=k}^{\infty} \frac{(-1)^{k-1}}{k^{2}} \, \frac{1}{j \, 2^{j}}
  = \frac{13}{24} \zeta(3). \]
This double sum is related to, but less approachable than, similar sums
studied by Ramanujan. It is also reminiscent of Euler's formula $\zeta(2,1)
= \zeta(3)$, which is the simplest instance of duality of multiple
polylogarithms. We review this duality and apply it to derive a companion
identity. We also discuss approaches based on computer algebra. All of our
approaches ultimately require the introduction of polylogarithms and
nontrivial relations between them. It remains an open challenge to relate the
triple integral or the double sum to $\zeta(3)$ directly.
\end{abstract}

\section{Introduction}
\label{sec-intro1}

The evaluation 
\begin{equation}\label{eq:int:zeta3}
  \zeta(3) = \int_{0}^{1} \int_{x}^{1} \int_{y}^{1} \frac{dz \, dy \, dx}{(1-x)yz} 
\end{equation}
is given by M.~Kontsevich and D.~Zagier  \cite{kontsevich-2000a} as an
illustration that $\zeta(3)$ is a \emph{period}, in the sense that it is the
value of an absolutely convergent integral of a rational function with
rational coefficients, over a domain in $\mathbb{R}^{3}$ given by polynomial
inequalities $(0 < x < y < z < 1)$ with rational coefficients. The goal of
this work is to prove and discuss the following, much less obvious, variation of a triple integral evaluation.
\begin{theorem}\label{thm:Z3}
  We have
\begin{equation}\label{eq:Z3}
  Z_{3} := \int_{1}^{\infty} \int_{0}^{1} \int_{0}^{1} \frac{dz \, dy \, dx}{x(x+y)(x+y+z)}
  = \frac{5}{24} \zeta(3).
\end{equation}
\end{theorem}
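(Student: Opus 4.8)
\emph{Sketch of the approach.}
The plan is to integrate out the three variables one at a time and reduce $Z_3$ to a one–dimensional integral of a dilogarithm. Performing the $z$–integration gives $\int_0^1 dz/(x+y+z)=\ln\frac{x+y+1}{x+y}$; then, since $\frac{d}{du}\Li_2(-1/u)=\frac1u\ln(1+1/u)$, the substitution $u=x+y$ yields $\int_0^1 \frac{1}{x+y}\ln\frac{x+y+1}{x+y}\,dy=\Li_2\!\bigl(\tfrac{-1}{x+1}\bigr)-\Li_2\!\bigl(\tfrac{-1}{x}\bigr)$, so that $Z_3=\int_1^\infty \frac1x\bigl[\Li_2(\tfrac{-1}{x+1})-\Li_2(\tfrac{-1}{x})\bigr]\,dx$. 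In the second summand the substitution $x=1/v$ produces $\int_0^1\frac{\Li_2(-v)}{v}\,dv=\Li_3(-1)=-\tfrac34\zeta(3)$, and in the first the substitution $x=1/v-1$ produces $\int_0^{1/2}\frac{\Li_2(-v)}{v(1-v)}\,dv$. Hence
\[
  Z_3=\frac34\zeta(3)+\int_0^{1/2}\frac{\Li_2(-v)}{v(1-v)}\,dv .
\]
Expanding $\Li_2$ as a power series and integrating term by term identifies the last integral with the negative of the double sum in the abstract, which already proves the stated equivalence; it remains to show that this integral equals $-\tfrac{13}{24}\zeta(3)$.

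Next I would write $\frac{1}{v(1-v)}=\frac1v+\frac1{1-v}$, splitting the integral as $\Li_3(-\tfrac12)+\int_0^{1/2}\frac{\Li_2(-v)}{1-v}\,dv$, and integrate the remaining integral by parts against $-\ln(1-v)$, using $\frac{d}{dv}\Li_2(-v)=-\frac{\ln(1+v)}{v}$. This reduces everything to the single identity
\[
  \Li_3\!\Bigl(-\tfrac12\Bigr)+\ln 2\,\Li_2\!\Bigl(-\tfrac12\Bigr)-\int_0^{1/2}\frac{\ln(1-v)\ln(1+v)}{v}\,dv=-\frac{13}{24}\zeta(3).
\]
For the log–product integral I would use the algebraic identity $2\ln(1-v)\ln(1+v)=\ln^2(1-v^2)-\ln^2(1-v)-\ln^2(1+v)$; after $v\mapsto v^2$ in the first term and $v\mapsto v/(1+v)$ in the third, each of the three pieces becomes an elementary transform of $\int_0^{a}\frac{\ln^2(1-t)}{t}\,dt=\ln^2(1-a)\ln a+2\ln(1-a)\Li_2(1-a)-2\Li_3(1-a)+2\zeta(3)$ at $a\in\{\tfrac12,\tfrac14,\tfrac13\}$. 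The classical values $\Li_2(\tfrac12)=\tfrac{\pi^2}{12}-\tfrac{\ln^2 2}{2}$ and $\Li_3(\tfrac12)=\tfrac78\zeta(3)-\tfrac{\pi^2\ln2}{12}+\tfrac{\ln^3 2}{6}$ collapse the $a=\tfrac12$ piece to $\tfrac14\zeta(3)-\tfrac13\ln^3 2$, leaving the $a=\tfrac14$ and $a=\tfrac13$ pieces in terms of $\Li_2$ and $\Li_3$ at $\tfrac34$ and $\tfrac23$ together with powers of $\ln2$ and $\ln3$.

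The hard part is the final bookkeeping: one must verify that the $\Li_3$ values at $\tfrac34$ and $\tfrac23$, the explicit $\Li_3(-\tfrac12)$, and all the accompanying dilogarithms and $\ln 2,\ln 3$ monomials combine into a pure rational multiple of $\zeta(3)$. This is exactly where the ``nontrivial relations between polylogarithms'' of the abstract are unavoidable; the relevant tools are the functional equations for $\Li_3$ — Landen's $\Li_3(x)+\Li_3(1-x)+\Li_3\!\bigl(\tfrac{x-1}{x}\bigr)=\zeta(3)+\tfrac16\ln^3 x+\tfrac{\pi^2}{6}\ln x-\tfrac12\ln^2 x\ln(1-x)$, the inversion relation $\Li_3(-x)-\Li_3(-1/x)=-\tfrac16\ln^3 x-\tfrac{\pi^2}{6}\ln x$ (for $x>0$), and the duplication relation $\Li_3(x^2)=4\bigl(\Li_3(x)+\Li_3(-x)\bigr)$ — applied together with the value of $\Li_3(\tfrac12)$ above; an equivalent shortcut is to import the classical alternating Euler sum $\sum_{n\ge1}(-1)^{n-1}H_n/n^2=\tfrac58\zeta(3)$, or to hand the single integral or the double sum to a symbolic summation/integration engine. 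Organizing this cancellation is the bulk of the argument, and a high–precision numerical evaluation of the triple integral (agreeing with $\tfrac{5}{24}\zeta(3)=0.25043\ldots$) confirms the target beforehand; consistent with the remark in the abstract, no route that avoids polylogarithms entirely is known.
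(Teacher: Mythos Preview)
Your reduction is correct and, interestingly, lands on a route the paper itself considers but sets aside as harder than its main proof. The paper's Section~\ref{sec:proof} integrates in $x$ first, reaching $\int_0^1\frac{\log t\,\log(t+2)}{1+t}\,dt$; splitting \emph{that} log product yields polylogarithms at $\pm\tfrac13$, which close via the single identity~\eqref{eq:Li3:3}. Your order of integration instead produces $Z_3=\tfrac34\zeta(3)+\int_0^{1/2}\frac{\Li_2(-v)}{v(1-v)}\,dv$ --- exactly the integral~\eqref{int-dilog} in Section~\ref{sec:double-sum} --- and your integration by parts then reproduces~\eqref{eq:S3:h1:Li}, about which the authors explicitly remark that it ``still requires a considerable amount of polylogarithmic relations'' beyond the direct route.

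There is a concrete shortfall in the toolkit you name for the final step. If you carry out your three-term splitting and apply Landen, inversion, and duplication systematically --- trading $\Li_3(\tfrac14)$ for $\Li_3(\pm\tfrac12)$ via duplication, then using Landen at $x=\tfrac34$ and $x=\tfrac23$ together with inversion --- the $\Li_3$ contributions do \emph{not} collapse: what survives is precisely the combination $\Li_3(\tfrac13)-\tfrac12\Li_3(-\tfrac13)$ plus elementary terms. Evaluating this is exactly the paper's identity~\eqref{eq:Li3:3}, which is \emph{not} a consequence of Landen, inversion, and duplication; the paper obtains it from the five-term relation~\eqref{eq:Li3:id} at $z=\tfrac12$. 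So your ``bookkeeping'' conceals the same nontrivial input as the main proof, reached after more intermediate work. Separately, the alternating Euler sum $\sum_{n\ge1}(-1)^{n-1}H_n/n^2=\tfrac58\zeta(3)$ governs $\int_0^1\frac{\ln(1-v)\ln(1+v)}{v}\,dv$, not the half-range $\int_0^{1/2}$ you need, so that proposed shortcut does not close the argument either.
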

In contrast to \eqref{eq:int:zeta3}, it appears to be a rather tricky problem to relate the triple integral \eqref{eq:Z3} to $\zeta(3)$ directly.  Indeed, all of our approaches to this integral have ultimately required the introduction of polylogarithms and nontrivial relations between them.  We give such a proof in Section~\ref{sec:proof}.

The integral \eqref{eq:Z3} might be seen as a continuous analog of multiple zeta values defined by 
\begin{equation}\label{eq:mzv}
  \zeta(s_{1},\ldots, s_{k})
  = \sum_{n_{1}>n_{2}> \cdots > n_{k} \geq 1} \frac{1}{n_{1}^{s_{1}}n_{2}^{s_{2}} \cdots n_{k}^{s_{k}}}.
\end{equation}
These sums were introduced by Euler. The reader is referred to the site \cite{hoffman-refs} maintained by M.~Hoffman for a large collection 
of papers related to these series.  For instance, when $k=3$, the multiple zeta values can be written as 
\begin{equation}
  \zeta(s_{1},s_{2},s_{3})
  = \sum_{p=1}^{\infty} \sum_{q=1}^{\infty} \sum_{r=1}^{\infty} \frac{1}{p^{s_{3}} (p+q)^{s_{2}} (p+q+r)^{s_{1}}},
\end{equation}
where the similarity with $Z_{3}$ becomes apparent. 

\begin{remark}\label{rk:hyperint}
Algorithmic approaches to computing (period) integrals such as the one in
\eqref{eq:Z3} are described in \cite{bb-feynman} and
\cite{panzer-hyperint}. In particular, Panzer implemented his symbolic
integration approach \cite{panzer-hyperint} using hyperlogarithms in a Maple
package called \texttt{HyperInt}. Using this package, the integral
\eqref{eq:Z3} is automatically evaluated as
\begin{equation*}
  Z_3 = \tfrac{19}{8} \zeta (3) - 2 \log (2) \zeta (2) - \Li_{2, 1}
   \left(\tfrac{1}{2}, 2 \right) - \Li_{1, 1, 1} \left(\tfrac{1}{3},
   \tfrac{3}{2}, 2 \right)
\end{equation*}
featuring the multiple polylogarithms reviewed in Section~\ref{sec:polylog}.
Simplifying the right-hand side to a multiple of $\zeta (3)$, however, is not
straightforward. Further comments on evaluating the integral $Z_3$ with the help of
computer algebra are included in Section~\ref{sec:computer}.
\end{remark}
 
In Section~\ref{sec:double-sum}, we show that the triple integral evaluation of Theorem~\ref{thm:Z3} is equivalent to the following relation between polylogarithms.
\begin{theorem}\label{thm:S3}
  We have
  \begin{equation}\label{eq:S3}
    S_3 := \sum_{k=1}^{\infty} \sum_{j=k}^{\infty} \frac{(-1)^{k-1}}{k^{2}} \, \frac{1}{j \, 2^{j}}
    = \frac{13}{24} \zeta(3).
  \end{equation}
\end{theorem}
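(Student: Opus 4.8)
The plan is to avoid evaluating $S_3$ head-on and instead reduce Theorem~\ref{thm:S3} to the triple integral evaluation of Theorem~\ref{thm:Z3} by establishing the elementary identity
\begin{equation*}
  S_3 + Z_3 = \tfrac{3}{4}\zeta(3);
\end{equation*}
together with \eqref{eq:Z3} this immediately gives $S_3 = \tfrac34\zeta(3) - \tfrac{5}{24}\zeta(3) = \tfrac{13}{24}\zeta(3)$.

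The first step is to rewrite the double sum as a one-dimensional integral. Using $\frac1{k^2} = \int_0^1 u^{k-1}(-\log u)\,du$ and $\frac1{j\,2^j} = \int_0^{1/2}x^{j-1}\,dx$ and interchanging the (absolutely convergent) double sum with the integrals, the two geometric series collapse to $\frac{1}{(1-x)(1+ux)}$, so
\begin{equation*}
  S_3 = -\int_0^1\!\!\int_0^{1/2}\frac{\log u}{(1-x)(1+ux)}\,dx\,du .
\end{equation*}
Performing the inner $x$-integration by partial fractions, the logarithmic endpoint values at $x=\tfrac12$ combine (via $\log 2 + \log(1+u/2) = \log(2+u)$) into $\log(2+u)/(1+u)$, which yields the compact form
\begin{equation*}
  S_3 = -\int_0^{1}\frac{\log u\,\log(2+u)}{1+u}\,du .
\end{equation*}

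The second step applies a parallel reduction to $Z_3$. Partial fractions in $x$ (the three coefficients sum to zero, which is precisely what makes the $x$-integral over $[1,\infty)$ converge) let us integrate $x$ out; integrating $y$ next and using $\int_0^1\frac{\log(1+y)}{y}\,dy = \tfrac12\zeta(2)$ leaves $Z_3 = \int_0^1 \frac1z\bigl[\tfrac12\zeta(2) - \Li_2(-z) + \Li_2(-1-z)\bigr]\,dz$, whose integrand is regular at $z=0$ only because of the exact cancellation $\tfrac12\zeta(2) = -\Li_2(-1)$. One integration by parts (both boundary terms vanish) converts this into
\begin{equation*}
  Z_3 = -\int_0^1\frac{\log z\,\log(1+z)}{z}\,dz + \int_0^1\frac{\log z\,\log(2+z)}{1+z}\,dz .
\end{equation*}
The first integral is classical, equal to $-\sum_{n\ge1}\frac{(-1)^{n-1}}{n^3} = -\tfrac34\zeta(3)$ by termwise integration, while the second is precisely $-S_3$ by the first step. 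Hence $Z_3 = \tfrac34\zeta(3) - S_3$, and Theorem~\ref{thm:Z3} completes the proof.

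The main obstacle here is care rather than depth: one must justify the interchange of summation and integration and, more delicately, never split the one-dimensional integral for $Z_3$ across its mutually cancelling singularities at $z=0$. A proof of Theorem~\ref{thm:S3} not routed through Theorem~\ref{thm:Z3} would be substantially harder, since evaluating $-\int_0^1\frac{\log u\,\log(2+u)}{1+u}\,du$ from scratch --- for instance by writing $\log(2+u) = \log(1+u) + \log\tfrac{2+u}{1+u}$ and substituting $v = 1/(1+u)$ --- produces di- and trilogarithms at the arguments $-\tfrac12$, $\tfrac13$, $-\tfrac13$, which condense to a rational multiple of $\zeta(3)$ only after several nontrivial functional equations for $\Li_2$ and $\Li_3$ are brought to bear.
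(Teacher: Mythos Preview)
Your proof is correct and follows essentially the same route as the paper: both deduce Theorem~\ref{thm:S3} from Theorem~\ref{thm:Z3} via the relation $S_3 + Z_3 = \tfrac34\zeta(3)$ (this is Lemma~\ref{lem:Z3:S3} in the paper). The only difference is in how that relation is established---the paper expands the integrand of $Z_3$ as a geometric series and integrates term by term until $S_3$ appears, whereas you first convert $S_3$ into the one-dimensional integral $-\int_0^1\frac{\log u\,\log(2+u)}{1+u}\,du$ and then recognise this very integral in your reduction of $Z_3$, which is exactly the paper's \eqref{eq:Z3:int:log2}.
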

As in the case of the triple integral, it is not clear how to relate this double sum directly to $\zeta(3)$.
In the notation for multiple polylogarithms reviewed in Section~\ref{sec:polylog}, the double sum in \eqref{eq:S3} can be expressed as
\begin{equation}\label{eq:S3:Li}
  S_3 = -\Li_{1,2}(\tfrac12, -1) - \Li_3(-\tfrac12).
\end{equation}
Readers familiar with multiple polylogarithms might therefore wonder whether the identity \eqref{eq:S3} is an instance of duality \cite[Section~6.1]{b3l}.
However, it appears that duality does not help in evaluating \eqref{eq:S3:Li}.  We review duality 
in Section~\ref{sec:duality} and show that it instead naturally provides the companion identity
\begin{equation}
  \sum_{\substack{
    k, m, n \geq 1\\
    \text{$k$ odd}
  }} \frac{1}{3^n}  \frac{1}{k (k + m) (k + m + n)} =
  \frac{13}{48} \zeta (3) . \label{eq:companion:intro}
\end{equation}

We note that \eqref{eq:S3} is reminiscent of Ramanujan's identity \cite[p.~259]{berndtI} (see also \cite[(7.4)]{b3l})
\begin{equation}\label{eq:S3:rama}
  \sum_{k=1}^\infty \sum_{j=k}^\infty \frac{1}{k} \, \frac{1}{j^2 \, 2^j}
  = \zeta(3) - \frac{\pi^2}{12} \log(2).
\end{equation}
Indeed, we show in Section~\ref{sec:double-sum:2} that Ramanujan's approach can be applied to evaluate
the non-alternating version of \eqref{eq:S3} as
\begin{equation}\label{eq:S3:plus}
  \sum_{k=1}^{\infty} \sum_{j=k}^{\infty} \frac{1}{k^{2}} \, \frac{1}{j \, 2^{j}}
  = \frac{5}{8} \zeta(3).
\end{equation}
We further indicate that \eqref{eq:S3} does not succumb readily to the same approach.

\begin{remark}\label{rk:Z2}
  We conclude this introduction by observing that the $2$-dimensional analog of \eqref{eq:Z3}, namely,
  \begin{equation}
    Z_2 := \int_{1}^{\infty} \int_{0}^{1} \frac{dy \, dx}{x(x+y)}
  \end{equation}
  is simple to evaluate.
  For instance, we can transform the domain of integration to the unit square via the change of variables $x \mapsto 1/x$ to obtain
  \begin{align*}
    Z_2
    = \int_0^1 \int_0^1 \frac{\id y \id x}{1 + x y}  
    = \sum_{n = 0}^{\infty} \int_0^1 \int_0^1 (- x y)^n \id y \id x  
    = \sum_{n = 0}^{\infty} \frac{(- 1)^n}{ (n + 1)^2} = \frac{1}{2} \zeta (2).
  \end{align*}
\end{remark}

\section{Polylogarithms}
\label{sec:polylog}
 
The polylogarithm function is defined, for $|z|<1$, by the power series
\begin{equation}\label{eq:Li}
  \Li_{s}(z) = \sum_{k=1}^{\infty} \frac{z^{k}}{k^{s}}.
\end{equation}
The function $\Li_2$ is also called the \emph{dilogarithm} and $\Li_3$ the \emph{trilogarithm}.
For other values of $z$, the polylogarithms are defined by analytic continuation, with the principal branch obtained from a cut along the positive real axis from $z=1$ to $\infty$.
We record that the analytic continuation of $\Li_{2}$ is provided by the integral representation
\begin{equation}\label{eq:Li2:int}
  \Li_{2}(z) = \int_{1}^{1-z} \frac{\log(t)}{1-t} dt.
\end{equation}

These functions satisfy a large collection of identities. Among these are the duplication formula
\begin{equation}\label{eq:Li:sign}
  \Li_{s}(-z) = -\Li_{s}(z) + 2^{1-s} \Li_{s}(z^{2}),
\end{equation}
as well as the inversion formula
\begin{equation}
  \Li_n(z) + (-1)^n \Li_n(1/z)
  = -\frac{(2\pi i)^n}{n!} B_n\left( \frac12 + \frac{\log(-z)}{2\pi i} \right),
\end{equation}
which holds for positive integers $n$ and $z\not\in [0,1]$.  Here, the $B_n$ are the Bernoulli polynomials, of which we will only use $B_2(x) = x^2-x+\tfrac16$ and $B_3(x) = x^3 - \tfrac32 x^2 + \tfrac12 x$.  In the cases $n=2$ and $n=3$ the inversion formula thus becomes
\begin{align}
  \Li_{2}(z) &= -\Li_{2} (1 /z ) - \tfrac{1}{2} \log^{2}(-z) - \tfrac{\pi^2}{6}, \label{eq:Li2:inv} \\
  \Li_{3}(z) &= \Li_{3} (1 /z ) - \tfrac{1}{6} \log^{3}(-z) - \tfrac{\pi^2}{6} \log(-z). \label{eq:Li3:inv} 
\end{align}
The dilogarithm and the trilogarithm satisfy a number of additional relations.
A small selection of formul{\ae} is recorded in \cite[Section 25.12]{olver-2010a} (see also \cite[Chapter~9]{berndtI}). A more complete collection appears in  \cite{lewin-1981a} and \cite{lewin-1981b}.
For our purposes, we record the reflection formul{\ae}
\begin{equation}\label{eq:Li2:refl}
  \Li_{2}(z) + \Li_{2}(1-z) = \tfrac{\pi^2}{6} - \log(z) \, \log(1-z)
\end{equation}
and
\begin{align}\label{eq:Li3:refl}
\begin{split}
  \Li_{3}(z) + &\Li_{3} (1 -z ) +  \Li_{3} (1-1 /z ) \\
  & = \zeta(3) + \tfrac16 \log^{3}(z) + \tfrac{\pi^2}{6} \log(z) - \tfrac12 \log^{2}(z) \, \log(1-z),
\end{split}
\end{align}
as well as the identity \cite[(6.34)]{lewin-1981a}
\begin{align}\label{eq:Li3:id}
  \Li_3 \left(\tfrac{1 - z}{1 + z} \right) &- \Li_3 \left(- \tfrac{1 - z}{1 + z} \right)
  = \tfrac{1}{2} \Li_3 \left(\tfrac{- z^2}{1 - z^2} \right) - 2 \Li_3 \left(\tfrac{- z}{1 - z} \right) - 2 \Li_3 \left(\tfrac{z}{1 + z} \right)
  \\\nonumber
  & + \tfrac{7}{4} \zeta (3) + \tfrac{\pi^2}{4} \log \left(\tfrac{1 - z}{1 + z} \right) +
  \tfrac{1}{4} \log^2 \left(\tfrac{1 + z}{1 - z} \right) \log \left(\tfrac{1
  - z^2}{z^2} \right).
\end{align}
It follows from \eqref{eq:Li2:refl} and \eqref{eq:Li3:refl}, together with $\Li_3 (- 1) = - \tfrac{3}{4} \zeta (3)$, that
\begin{align}
  \Li_{2} ( \tfrac{1}{2} ) &= \tfrac{\pi^2}{12} - \tfrac12 \log^2(2), \label{eq:Li2:2}\\
  \Li_{3} ( \tfrac{1}{2} ) &= \tfrac{7}{8} \zeta(3) - \tfrac{\pi^2}{12} \log(2) + \tfrac{1}{6}\log^{3}(2). \label{eq:Li3:2}
\end{align}
Combining the mentioned identities as done, for instance, in \cite[(39.1), (39.4); p.~324]{berndtIV} where Ramanujan considers such combinations, we can further derive the relation
\begin{equation}\label{eq:Li2:3}
  2\Li_2(\tfrac13) - \Li_2(-\tfrac13) = \tfrac{\pi^2}{6} - \tfrac12 \log^2(3).
\end{equation}
Similarly, by setting $z = 1 / 2$ in \eqref{eq:Li3:id}, we find
\begin{equation}\label{eq:Li3:3}
  2 \Li_3 (\tfrac{1}{3} ) - \Li_3 (- \tfrac{1}{3} )
  = \tfrac{13}{6} \zeta (3) - \tfrac{\pi^2}{6} \log (3) + \tfrac{1}{6} \log^3 (3) .
\end{equation}

Generalizing \eqref{eq:Li}, the \emph{multiple polylogarithms} are the sums
\begin{equation}\label{eq:Li:mult}
  \Li_{s_{1},\ldots, s_{k}}(z_1,\ldots,z_k)
  = \sum_{n_{1}>n_{2}> \cdots > n_{k} \geq 1} \frac{z_1^{n_1} z_2^{n_2} \cdots z_k^{n_k}}{n_{1}^{s_{1}}n_{2}^{s_{2}} \cdots n_{k}^{s_{k}}},
\end{equation}
though notation varies throughout the literature (in particular, the order of the arguments is sometimes reversed; the choice here is consistent with multiple zeta values as defined in \eqref{eq:mzv}, so that \eqref{eq:Li:mult} becomes $\zeta(s_1,\ldots,s_k)$ when $z_1=z_2=\ldots=z_k=1$).
A vast literature, see, for instance, \cite{b3l} and the references therein, exists concerning relations between values of multiple polylogarithms.

\section{A proof of the triple integral evaluation}
\label{sec:proof}

In this section, we evaluate the triple integral $Z_3$ as claimed in Theorem~\ref{thm:Z3}.
We begin by integrating with respect to $x$ and, then, with respect to $y$ to obtain
\begin{align}
  Z_{3} &= \int_{0}^{1} \int_{0}^{1} \left[ \frac{\log(y+1)}{yz} - \frac{\log(y+z+1)}{z(y+z)} \right] \, dy \, dz \nonumber\\
  &= \int_{0}^{1} \left[ \Li_{2}(-z-1) - \Li_2(-1) - \Li_{2}(-z) \right] \frac{dz}{z} \label{eq:Z3:int:Li}\\
  &= \int_{0}^{1} \left[ \Li_{2}(-z-1) - \Li_2(-1) \right] \frac{dz}{z} + \frac34 \zeta(3). \nonumber
\end{align}
For the second equality, we used the fact that the derivative of $\Li_2(-x)$ is  $- \log(x+1)/x$,
while, for the third, we integrated term by term to find
\begin{equation}\label{eq:zeta3:alt}
  \int_0^1 \Li_2(-z) \frac{dz}{z}
  = \int_0^1 \sum_{n=1}^\infty \frac{(-z)^n}{n^2} \frac{dz}{z}
  = \sum_{n=1}^\infty \frac{(-1)^n}{n^3}
  = -\frac34 \zeta(3).
\end{equation}
We now use the integral representation \eqref{eq:Li2:int} of the dilogarithm to conclude
\begin{align*}
  Z_3 &= \int_{0}^{1} \int_{2}^{2+z} \frac{\log(t) \, dt}{1-t} \frac{dz}{z} + \frac34 \zeta(3).
\end{align*}
Exchanging the order of integration and evaluating the inner integral, we obtain 
\begin{equation}\label{eq:Z3:int:log2}
  Z_3 = \int_0^1 \frac{\log(t) \, \log(t+2)}{1+t} \, dt + \frac34 \zeta(3).
\end{equation}
We now replace
\begin{equation}
  2\log(t) \log(t+2) = \log^{2}(t) + \log^{2} (t+2) - \log^{2} \left( \frac{t}{t+2} \right)
\end{equation}
in the integral \eqref{eq:Z3:int:log2},
and make the change of variables $t/(t+2) \to t$ in the third of the three resulting integrals, to find
\begin{align}
  Z_3 &= \frac12 \int_0^1 \frac{\log^2(t)}{1+t} \, dt
  + \frac12 \int_0^1 \frac{\log^2(t+2)}{1+t} \, dt
  - \int_0^{1/3} \frac{\log^2(t)}{1-t^2} \, dt
  + \frac34 \zeta(3) \nonumber\\
  &= \frac12 \int_0^1 \frac{\log^2(t+2)}{1+t} \, dt
  - \int_0^{1/3} \frac{\log^2(t)}{1-t^2} \, dt
  + \frac32 \zeta(3). \label{eq:Z3:int:2}
\end{align}
Here, we evaluated the first integral by expanding $1/(1+t)$ as a geometric series, integrating term by term and using
\begin{equation}\label{eq:log2:mellin}
  \int_0^x t^{n-1} \log^2(t) \, dt
  = \frac{x^{n}}{n^3} \left( 2 - 2n \log(x) + n^2 \log^2(x) \right)
\end{equation}
(which is readily verified by differentiating both sides)
as well as the final sum in \eqref{eq:zeta3:alt}.
Summing \eqref{eq:log2:mellin}, we find
\begin{align}
  \int_0^x \frac{\log^2(t)}{1-t^2} \, dt
  &= \Li_3(x)-\Li_3(-x) - \log(x) (\Li_2(x)-\Li_2(-x))
  \\\nonumber&\quad
  + \tfrac12 \log^2(x) \log( \tfrac{1+x}{1-x} ).
\end{align}
In light of the relations \eqref{eq:Li2:3} and \eqref{eq:Li3:3}, this shows that
\begin{equation}\label{eq:int:log2:3}
  \int_0^{1/3} \frac{\log^2(t)}{1-t^2} \, dt
  = \tfrac{13}{6} \zeta(3) - \Li_3(\tfrac13) - \Li_2(\tfrac13) \log(3)
  - \tfrac16 \log(\tfrac98)\log^2(3).
\end{equation}
By appealing to analytic continuation, we could similarly approach the remaining integral in \eqref{eq:Z3:int:2}.
Alternatively, we use the formula \cite[(6.28)]{lewin-1981b}
\begin{align}\label{lewin-1}
  \int_{0}^{t} \frac{\log^{2}(u+1)}{u} du
  &= \log t \log ^{2}(t+1) - \tfrac{2}{3} \log ^{3}(t+1) \\\nonumber
  &- 2 \log(t+1) \Li_{2} \left( \frac{1}{t+1} \right) - 2 \Li_{3} \left( \frac{1}{t+1} \right) + 2 \zeta(3)
\end{align}
to evaluate
\begin{align}
  \frac12 \int_0^1 \frac{\log^2(t+2)}{1+t} \, dt
  &= \frac12\int_{0}^{2} \frac{\log^{2}(t+1)}{t} \, dt - \frac12\int_{0}^{1} \frac{\log^{2}(t+1)}{t} \, dt \nonumber\\
  &= \tfrac{7}{8} \zeta(3) - \Li_3(\tfrac13) - \Li_2(\tfrac13) \log(3)
  - \tfrac16 \log(\tfrac98)\log^2(3). \label{eq:int:log2:s}
\end{align}
Here, we used \eqref{eq:Li2:2} and \eqref{eq:Li3:2} to reduce the polylogarithms at $1/2$.
The claimed evaluation, $Z_3 = \frac78 \zeta(3) - \frac{13}{6} \zeta(3) + \frac32 \zeta(3) = \frac{5}{24} \zeta(3)$, now follows from using \eqref{eq:int:log2:3} and \eqref{eq:int:log2:s} in \eqref{eq:Z3:int:2}.

\section{An equivalent double sum evaluation}
\label{sec:double-sum}

In this section, we relate the triple integral $Z_3$ of Theorem~\ref{thm:Z3} to the double sum in Theorem~\ref{thm:S3} in the following way.

\begin{lemma}\label{lem:Z3:S3}
  We have
  \begin{equation}\label{eq:Z3:S3}
    Z_{3} =  \frac{3}{4} \zeta(3) - \sum_{k=1}^{\infty} \sum_{j=k}^{\infty} \frac{(-1)^{k-1}}{k^{2}} \frac{1}{j \, 2^{j}}.
  \end{equation}
\end{lemma}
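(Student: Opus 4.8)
The plan is to deduce the lemma from the representation $Z_3 = \int_0^1 \frac{\log(t)\,\log(t+2)}{1+t}\,dt + \frac34\zeta(3)$ of \eqref{eq:Z3:int:log2}, already established in the proof of Theorem~\ref{thm:Z3}: it then suffices to show that the remaining integral equals $-S_3$, the double sum of Theorem~\ref{thm:S3}. First I would insert the series $\log(t+2) = \log 2 - \sum_{n\geq1} \frac{(-t)^n}{n\,2^n}$ (uniformly convergent on $[0,1]$) and integrate term by term:
\[
  \int_0^1 \frac{\log(t)\,\log(t+2)}{1+t}\,dt
  = \log 2 \int_0^1 \frac{\log t}{1+t}\,dt - \sum_{n\geq1} \frac{(-1)^n}{n\,2^n} \int_0^1 \frac{t^n\log t}{1+t}\,dt .
\]
Each inner integral is evaluated by expanding $1/(1+t)$ as a geometric series and using $\int_0^1 t^p\log t\,dt = -1/(p+1)^2$, which gives $\int_0^1 \frac{\log t}{1+t}\,dt = -\frac{\pi^2}{12}$ and, after reindexing, $\int_0^1 \frac{t^n\log t}{1+t}\,dt = -(-1)^n \sum_{k>n} \frac{(-1)^{k-1}}{k^2}$. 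Substituting these in, the factors $(-1)^n$ cancel and one is left with
\[
  \int_0^1 \frac{\log(t)\,\log(t+2)}{1+t}\,dt = -\frac{\pi^2}{12}\log 2 + \sum_{n\geq1} \frac{1}{n\,2^n} \sum_{k>n} \frac{(-1)^{k-1}}{k^2} .
\]

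The key step is a rearrangement of the (absolutely convergent) double sum $A := \sum_{n\geq1} \frac{1}{n\,2^n} \sum_{k>n} \frac{(-1)^{k-1}}{k^2}$. Splitting the product
\[
  \left( \sum_{n\geq1} \frac{1}{n\,2^n} \right) \left( \sum_{k\geq1} \frac{(-1)^{k-1}}{k^2} \right) = (\log 2)\,\frac{\pi^2}{12}
\]
over the quadrant $\{n,k\geq1\}$ into the parts with $k>n$ and with $k\leq n$ gives $A + C = \frac{\pi^2}{12}\log 2$, where $C := \sum_{n\geq1} \frac{1}{n\,2^n} \sum_{k=1}^{n} \frac{(-1)^{k-1}}{k^2}$. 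Interchanging the order of summation in $C$ (summing over $n\geq k$ first) identifies it with $\sum_{k\geq1} \frac{(-1)^{k-1}}{k^2} \sum_{j\geq k} \frac{1}{j\,2^j} = S_3$. Hence $A = \frac{\pi^2}{12}\log 2 - S_3$, the $\log 2$ terms cancel, and $\int_0^1 \frac{\log(t)\,\log(t+2)}{1+t}\,dt = -S_3$, which is exactly \eqref{eq:Z3:S3}.

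I do not anticipate a serious obstacle: once \eqref{eq:Z3:int:log2} is in hand, the argument uses only elementary integrals and the bookkeeping of a lattice-quadrant sum into its two natural halves. The one point needing care is the justification of the interchanges of summation and integration, but these are routine given the uniform convergence of the series for $\log(t+2)$ on $[0,1]$, the integrability of $\log(t)/(1+t)$ near $t=0$, and the absolute convergence of all the double sums involved, so that dominated convergence and Fubini apply throughout.
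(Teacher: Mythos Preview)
Your argument is correct. All the series manipulations check out: the expansion of $\log(t+2)$, the evaluation $\int_0^1 \frac{t^n\log t}{1+t}\,dt = -(-1)^n\sum_{k>n}\frac{(-1)^{k-1}}{k^2}$, and the quadrant splitting $A+C=\frac{\pi^2}{12}\log 2$ with $C=S_3$ all go through, and the justifications you sketch (Weierstrass $M$-test for the $\log(t+2)$ series, absolute convergence of the double sums) are adequate.

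Your route, however, is genuinely different from the paper's. The paper proves Lemma~\ref{lem:Z3:S3} \emph{directly from the definition} of $Z_3$: it expands $\frac{1}{x+y+z}$ as a geometric series in $z/(x+y)$, integrates in $z$, then in $y$, splitting off the term $\sum_{k\geq 0}\frac{(-1)^k}{(k+1)^2 x^{k+2}}$ whose $x$-integral gives $\tfrac34\zeta(3)$, and finally evaluates $\int_1^\infty \frac{dx}{x(x+1)^{k+1}}=\sum_{j\geq k}\frac{1}{j\,2^j}$ via $w=1/(x+1)$. This makes the lemma independent of Section~\ref{sec:proof}. You instead import \eqref{eq:Z3:int:log2} from that section and reduce the problem to showing $\int_0^1\frac{\log t\,\log(t+2)}{1+t}\,dt=-S_3$, which you do by a pleasant lattice-splitting trick. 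There is no circularity, since \eqref{eq:Z3:int:log2} is derived without reference to Lemma~\ref{lem:Z3:S3}; but your proof makes the lemma logically downstream of the first half of Section~\ref{sec:proof}, whereas the paper's proof keeps it self-contained. On the other hand, your computation incidentally yields the clean closed form $\int_0^1\frac{\log t\,\log(t+2)}{1+t}\,dt=-S_3$, which is a nice byproduct not visible from the paper's direct approach.
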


\begin{proof}
  Expanding the integrand of \eqref{eq:Z3} as a geometric series, we obtain
  \begin{align*}
    \int_{0}^{1} \frac{dz}{x(x+y)(x+y+z) }
    & = \frac{1}{x(x+y)^{2}} \int_{0}^{1} \frac{dz}{1 + \frac{z}{x+y}} \\
    & = \frac{1}{x} \sum_{k=0}^{\infty} \int_{0}^{1} \frac{(-1)^{k} \, z^{k} \, dz}{(x+y)^{k+2}} \\
    & = \frac{1}{x} \sum_{k=0}^{\infty} \frac{(-1)^{k}}{(k+1) (x+y)^{k+2}}.
  \end{align*}
  We then integrate with respect to $y$ to find
  \begin{align*} 
    \int_{0}^{1} \int_{0}^{1} \frac{dz \, dy}{x(x+y)(x+y+z)}
    & = \frac{1}{x} \int_{0}^{1} \sum_{k=0}^{\infty} \frac{(-1)^{k} \, dy}{(k+1) (x+y)^{k+2}} \\
    & = \frac{1}{x} \sum_{k=0}^{\infty} \frac{(-1)^{k}}{(k+1)^{2}} \left[ \frac{1}{x^{k+1}} - \frac{1}{(x+1)^{k+1}} \right] \\
    & = \sum_{k=0}^{\infty} \frac{(-1)^{k}}{(k+1)^{2} x^{k+2}} - \frac{1}{x} \sum_{k=0}^{\infty} \frac{(-1)^{k}}{(k+1)^{2}} \frac{1}{(x+1)^{k+1}}.
  \end{align*}
  Integrating the first term with respect to $x$ results in
  \begin{equation}
    \sum_{k=0}^{\infty} \frac{(-1)^{k}}{(k+1)^{2} } \int_{1}^{\infty} \frac{dx}{x^{k+2}} = \sum_{k=0}^{\infty} \frac{(-1)^{k}}{(k+1)^{3}} = 
    \frac{3}{4} \zeta(3).
  \end{equation}
  On the other hand, the second term contributes
  \begin{equation}
    \sum_{k=0}^{\infty} \frac{(-1)^{k}}{(k+1)^{2}} \int_{1}^{\infty} \frac{dx}{x \, (x+1)^{k+1}}.
  \end{equation}
  Finally, we note that 
  \begin{equation}
    \int_{1}^{\infty} \frac{dx}{x(x+1)^{k+1}}  =  \sum_{j=k}^{\infty} \frac{1}{j \, 2^{j}},
  \end{equation}
  which follows from the change of variables $w = 1/(x+1)$ and expanding a geometric series.
  Combining terms, we arrive at \eqref{eq:Z3:S3}.
\end{proof}

Observe that Lemma~\ref{lem:Z3:S3} and Theorem~\ref{thm:Z3} immediately imply Theorem~\ref{thm:S3}.

Let us conclude this section by deriving an alternative integral representation for the double sum
\begin{equation}
  S_3 = \sum_{k = 1}^{\infty} \frac{(- 1)^{k - 1}}{k^2} \sum_{j=k}^{\infty}  \frac{1}{j \, 2^{j}}
\end{equation}
of Theorem~\ref{thm:S3}.

\begin{theorem}
  We have
\begin{equation}\label{int-dilog}
  S_3
  = - \int_{0}^{1/2} \frac{\Li_{2}(-t)}{t(1-t)} \, dt
  = - 4 \int_{0}^{\pi/4} \frac{ \Li_{2}( - \sin^{2} \varphi)}{\sin(2 \varphi)} \, d \varphi.
\end{equation}
\end{theorem}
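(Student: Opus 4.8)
The plan is to turn the inner sum over $j$ into an integral and then recognize the outer sum over $k$ as a dilogarithm. Since $\sum_{j=k}^{\infty} t^{j-1} = t^{k-1}/(1-t)$ for $0 \le t < 1$, integrating term by term from $0$ to $1/2$ gives
\[
  \sum_{j=k}^{\infty} \frac{1}{j \, 2^{j}} = \int_{0}^{1/2} \frac{t^{k-1}}{1-t} \, dt .
\]
Substituting this into the definition of $S_3$ and interchanging summation and integration yields
\[
  S_3 = \int_{0}^{1/2} \frac{1}{1-t} \sum_{k=1}^{\infty} \frac{(-1)^{k-1} t^{k-1}}{k^{2}} \, dt .
\]
The interchange is legitimate because the associated double series of absolute values converges: one has $\int_{0}^{1/2} t^{k-1}/(1-t) \, dt \le 2 \int_{0}^{1/2} t^{k-1} \, dt = 1/(k \, 2^{k-1})$, so that $\sum_{k} \frac{1}{k^{2}} \int_{0}^{1/2} \frac{t^{k-1}}{1-t} \, dt \le \sum_{k} \frac{1}{k^{3} 2^{k-1}} < \infty$.

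Next, I would identify the inner sum via $\sum_{k=1}^{\infty} \frac{(-1)^{k-1} t^{k-1}}{k^{2}} = -\tfrac{1}{t} \sum_{k=1}^{\infty} \frac{(-t)^{k}}{k^{2}} = -\Li_{2}(-t)/t$, valid by \eqref{eq:Li}, which immediately produces the first claimed identity
\[
  S_3 = -\int_{0}^{1/2} \frac{\Li_{2}(-t)}{t(1-t)} \, dt .
\]
The integrand is harmless at the endpoints: near $t=0$ it tends to $-1$ since $\Li_{2}(-t) = -t + O(t^{2})$, and near $t = 1/2$ it is bounded.

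Finally, for the trigonometric form I would apply the change of variables $t = \sin^{2} \varphi$, so that $dt = \sin(2\varphi) \, d\varphi$ and $t(1-t) = \sin^{2}\varphi \cos^{2}\varphi = \tfrac14 \sin^{2}(2\varphi)$; hence $\frac{dt}{t(1-t)} = \frac{4 \, d\varphi}{\sin(2\varphi)}$, and the limits $t = 0, \tfrac12$ correspond to $\varphi = 0, \tfrac{\pi}{4}$. This turns the previous display into $S_3 = -4 \int_{0}^{\pi/4} \Li_{2}(-\sin^{2}\varphi)/\sin(2\varphi) \, d\varphi$, as desired. I do not anticipate a genuine obstacle here; the only step needing a line of care is the justification of the term-by-term integration, which the absolute-convergence estimate above supplies.
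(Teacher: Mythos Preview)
Your proof is correct and, in fact, more direct than the paper's.  Both arguments follow the same overall plan---represent the inner tail sum $\sum_{j\ge k} 1/(j\,2^j)$ as an integral, swap sum and integral, recognize the resulting series as $-\Li_2(-t)/t$, then substitute $t=\sin^2\varphi$ for the trigonometric form---but they differ in how the inner sum is converted to an integral.  The paper first identifies the tail as $\tfrac{1}{k}\,{}_2F_1(k,k;k+1;-1)$ and then invokes Euler's integral representation of ${}_2F_1$, obtaining $\int_0^1 (x/(1+x))^k\,dx/x$; the change of variables $t=x/(1+x)$ then lands on your integral.  You short-circuit this detour by writing $\sum_{j\ge k} 1/(j\,2^j)=\int_0^{1/2} t^{k-1}/(1-t)\,dt$ straight from the geometric series, which is cleaner and avoids any appeal to special-function machinery.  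Your explicit absolute-convergence bound also makes the interchange of sum and integral transparent, a point the paper leaves implicit.
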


\begin{proof}
  It is not hard to show (and can even be done automatically, for instance, using creative telescoping) that, for positive integers $k$, the inner sum can be expressed in hypergeometric terms as
  \begin{equation}
    \sum_{j=k}^{\infty}  \frac{1}{j \, 2^{j}}
    = \log 2 - \sum_{j=1}^{k-1} \frac{1}{j \, 2^{j}}
    = \frac{1}{k} \pFq21{k \quad k}{k+1}{-1}.
  \end{equation}
  The double sum therefore equals
  \begin{equation}
    S_3 = \sum_{k=1}^{\infty} \frac{(-1)^{k-1}}{k^{3}}  \pFq21{k \quad k}{k+1}{-1}.
  \end{equation}
  The classical integral representation for the hypergeometric function 
  \begin{equation}
    \pFq21{a \quad b}{c}{z} = \frac{1}{B(b,c-b)} \int_{0}^{1} x^{b-1} (1-x)^{c-b-1} (1 - xz)^{-a} \, dx,
  \end{equation}
  see, for instance, \cite[page 65]{andrews-1999a}, gives 
  \begin{equation}
    S_3 =  \int_{0}^{1}\left[  \sum_{k=1}^{\infty} \frac{(-1)^{k-1}}{k^{2}}  \left( \frac{x}{x+1} \right)^{k}  \right] \, \frac{dx}{x}.
  \end{equation}
  The series definition \eqref{eq:Li} for the dilogarithm, followed by the change of variables $t = \frac{x}{x+1}$, therefore implies the claimed integral representation.
\end{proof}

We note that, alternatively, the series $S_3$ can be obtained starting from the second integral in \eqref{int-dilog} by expanding the dilogarithm and using \cite[Entry 2.516.2]{gradshteyn-2015a}
\begin{equation}
\int \frac{\sin^{2k-1} \varphi}{\cos \varphi} \, d \varphi = - \sum_{j=1}^{k-1} \frac{\sin^{2j} \varphi }{2j} - \log( \cos \varphi).
\end{equation}

\section{A companion identity via duality}
\label{sec:duality}

Recall that Theorem~\ref{thm:S3} is equivalent to
\begin{equation}
  \label{eq:S3:Li:only} - \operatorname{Li}_{2, 1} \left(- 1, \tfrac{1}{2} \right) -
  \operatorname{Li}_3 \left(- \tfrac{1}{2} \right) = \tfrac{13}{24} \zeta (3) .
\end{equation}
In this section, we review duality \cite[Section~6.1]{b3l} and apply it to
the identity \eqref{eq:S3:Li:only} to deduce the following companion identity.

\begin{theorem}\label{thm:companion}
  We have
  \begin{equation}
    \sum_{\substack{
      k, m, n \geq 1\\
      \text{$k$ odd}
    }} \frac{1}{3^n}  \frac{1}{k (k + m) (k + m + n)} =
    \frac{13}{48} \zeta (3) . \label{eq:companion}
  \end{equation}
\end{theorem}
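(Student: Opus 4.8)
The plan is to identify both the triple sum in \eqref{eq:companion} and the combination of multiple polylogarithms already evaluated in Theorem~\ref{thm:S3} as $\mathbb{Q}$-linear combinations of iterated integrals on $[0,1]$ with letters among $\omega_c := dt/(t-c)$, and to pass from the one to the other by the path-reversal involution $t\mapsto 1-t$; this is precisely the duality reviewed in \cite[Section~6.1]{b3l}.

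First I would reindex the left-hand side of \eqref{eq:companion}. Setting $r=k$, $q=k+m$, $p=k+m+n$ rewrites it as $\sum_{p>q>r\geq 1,\, r \text{ odd}} (1/3)^p\,3^q/(pqr)$, and inserting $\tfrac12(1-(-1)^r)$ to select the odd values of $r$ expresses it, in the notation \eqref{eq:Li:mult}, as
\[
  \tfrac12\Bigl[\Li_{1,1,1}\bigl(\tfrac13,3,1\bigr)-\Li_{1,1,1}\bigl(\tfrac13,3,-1\bigr)\Bigr].
\]
Both nested series converge absolutely, since $p>q$ forces $(1/3)^p3^q=(1/3)^{p-q}\leq\tfrac13$ while the remaining harmonic factors sum to a convergent series. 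Hence \eqref{eq:companion} is equivalent to the single identity $\Li_{1,1,1}(\tfrac13,3,1)-\Li_{1,1,1}(\tfrac13,3,-1)=\tfrac{13}{24}\zeta(3)$.

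Next I would invoke the iterated integral representation of multiple polylogarithms, $\Li_{s_1,\dots,s_k}(z_1,\dots,z_k)=(-1)^k\int_0^1\omega_0^{\,s_1-1}\omega_{y_1}\cdots\omega_0^{\,s_k-1}\omega_{y_k}$ with $y_j=1/(z_1\cdots z_j)$, the word being read from the endpoint $1$ toward $0$. This gives $\Li_{1,2}(\tfrac12,-1)=\int_0^1\omega_2\omega_0\omega_{-2}$ and $\Li_3(-\tfrac12)=-\int_0^1\omega_0\omega_0\omega_{-2}$, as well as $\Li_{1,1,1}(\tfrac13,3,1)=-\int_0^1\omega_3\omega_1\omega_1$ and $\Li_{1,1,1}(\tfrac13,3,-1)=-\int_0^1\omega_3\omega_1\omega_{-1}$; all four words are admissible (first letter $\neq\omega_1$, last letter $\neq\omega_0$), the interior letter $\omega_1$ contributing only an integrable logarithmic singularity. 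The involution $t\mapsto 1-t$ reverses a word and replaces each $\omega_c$ by $\omega_{1-c}$, up to the overall sign $(-1)^w$ coming from the change of variables in the $w$ nested integrations; for our weight-$3$ words this yields $\int_0^1\omega_2\omega_0\omega_{-2}=-\int_0^1\omega_3\omega_1\omega_{-1}$ and $\int_0^1\omega_0\omega_0\omega_{-2}=-\int_0^1\omega_3\omega_1\omega_1$. Since \eqref{eq:S3:Li:only} (equivalently \eqref{eq:S3:Li} together with Theorem~\ref{thm:S3}) reads $-\int_0^1\omega_2\omega_0\omega_{-2}+\int_0^1\omega_0\omega_0\omega_{-2}=\tfrac{13}{24}\zeta(3)$, applying duality to each term turns it into $\int_0^1\omega_3\omega_1\omega_{-1}-\int_0^1\omega_3\omega_1\omega_1=\tfrac{13}{24}\zeta(3)$, that is, $\Li_{1,1,1}(\tfrac13,3,1)-\Li_{1,1,1}(\tfrac13,3,-1)=\tfrac{13}{24}\zeta(3)$; halving gives \eqref{eq:companion}.

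The part requiring care — though it is routine once organized — is fixing the iterated integral dictionary exactly: the placement of the $dt/t$ letters, the sign $(-1)^{\text{depth}}$ relating series to integrals, and above all the sign $(-1)^{\text{weight}}$ in the duality involution, together with checking that every word that occurs is admissible so that the integrals, and hence duality, are legitimate despite $\omega_1$ sitting at a pole on the boundary of the simplex. With these conventions nailed down the argument is pure bookkeeping; the only conceptual ingredient is the single application of duality.
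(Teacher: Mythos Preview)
Your proof is correct and follows essentially the same route as the paper's: both arguments apply the duality involution $t\mapsto 1-t$ to the two weight-$3$ iterated integrals representing $\Li_{1,2}(\tfrac12,-1)$ and $\Li_3(-\tfrac12)$ from \eqref{eq:S3:Li:only}, identify the duals as $\Li_{1,1,1}(\tfrac13,3,\pm1)$, and then combine via $\tfrac12(1-(-1)^r)$ to recover the ``$k$ odd'' triple sum. Your write-up is slightly more explicit about the sign bookkeeping and admissibility checks, but the content is identical.
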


We use the notation of \cite{b3l} and write
\begin{equation*}
  l \left(\begin{array}{c}
     s_1, \ldots, s_k\\
     y_1, \ldots, y_k
   \end{array} \right) \assign \operatorname{Li}_{s_1, \ldots, s_k} \left(\frac{1}{y_1}, \frac{y_1}{y_2}, \frac{y_2}{y_3}, \ldots, \frac{y_{k -
   1}}{y_k} \right)
\end{equation*}
for multiple polylogarithms as well as
\begin{equation*}
  \int_0^1 \omega (a_1) \omega (a_2) \cdots \omega (a_n) \assign \int_0^1
   \int_0^{x_1} \cdots \int_0^{x_{n - 1}} \frac{\id x_n \cdots \id x_2
   \id x_1}{(x_n - a_n) \cdots (x_2 - a_2) (x_1 - a_1)}
\end{equation*}
for iterated integrals. These integrals provide a natural way to express
multiple polylogarithms. Indeed, we have the weight-dimensional iterated
integral representation \cite[(4.9)]{b3l}
\begin{equation}
  \label{eq:li:itint:1} l \left(\begin{array}{c}
    s_1, \ldots, s_k\\
    y_1, \ldots, y_k
  \end{array} \right) = (- 1)^k \int_0^1 \prod_{j = 1}^k \omega (0)^{s_j - 1}
  \omega (y_j).
\end{equation}
Note that the weight of the multiple polylogarithm is defined to be $s_1 + s_2
+ \cdots + s_k$ (and that this matches the number of integrations in the
iterated integral representation \eqref{eq:li:itint:1}).

Reversing the order of integration in \eqref{eq:li:itint:1} and replacing each
integration variable $x$ by $1 - x$ results in the dual iterated integral
representation \eqref{eq:li:itint:2} of the multiple polylogarithm.

\begin{theorem}[{\cite[(6.1)]{b3l}}]
  We have
  \begin{equation}
    \label{eq:li:itint:2} l \left(\begin{array}{c}
      s_1, \ldots, s_k\\
      y_1, \ldots, y_k
    \end{array} \right) = (- 1)^{s_1 + s_2 + \cdots + s_k + k} \int_0^1
    \prod_{j = 0}^{k - 1} \omega (1 - y_{k - j}) \omega (1)^{s_{k - j} - 1}.
  \end{equation}
\end{theorem}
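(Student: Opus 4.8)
\emph{Proof plan.}
Statement \eqref{eq:li:itint:2} is obtained from \eqref{eq:li:itint:1} by reversing the order of integration and applying $x\mapsto 1-x$ to every variable, and the plan is simply to carry this out carefully. The key is to first isolate a purely combinatorial reflection principle for bare iterated integrals: for letters $a_1,\dots,a_n$,
\[
  \int_0^1 \omega(a_1)\,\omega(a_2)\cdots\omega(a_n)
  \;=\; (-1)^n\int_0^1 \omega(1-a_n)\,\omega(1-a_{n-1})\cdots\omega(1-a_1);
\]
that is, reversing the word and replacing each letter $a$ by $1-a$ changes the integral only by the sign $(-1)^n$.

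Granting this, \eqref{eq:li:itint:2} follows mechanically. The word attached to $l\!\left(\begin{smallmatrix}s_1,\dots,s_k\\ y_1,\dots,y_k\end{smallmatrix}\right)$ on the right of \eqref{eq:li:itint:1} is $(0^{s_1-1},y_1,0^{s_2-1},y_2,\dots,0^{s_k-1},y_k)$, of length $n=s_1+\cdots+s_k$ (here $0^{s_j-1}$ abbreviates $s_j-1$ consecutive zeros, matching the factor $\omega(0)^{s_j-1}$). Reversing this word and sending $0\mapsto 1$ and $y_j\mapsto 1-y_j$ produces $(1-y_k,1^{s_k-1},1-y_{k-1},1^{s_{k-1}-1},\dots,1-y_1,1^{s_1-1})$, which is exactly the word $\prod_{j=0}^{k-1}\omega(1-y_{k-j})\,\omega(1)^{s_{k-j}-1}$ appearing in \eqref{eq:li:itint:2}; and the two signs compound to $(-1)^k\cdot(-1)^n=(-1)^{s_1+\cdots+s_k+k}$, as claimed.

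To prove the reflection principle, I would write $\int_0^1\omega(a_1)\cdots\omega(a_n)$ as an honest $n$-fold integral over the simplex $\{0<x_n<x_{n-1}<\cdots<x_1<1\}$ with integrand $\prod_{i=1}^n(x_i-a_i)^{-1}$, and substitute $x_i\mapsto 1-x_i$ for every $i$. This substitution has Jacobian of absolute value $1$ and carries the simplex to $\{0<x_1<x_2<\cdots<x_n<1\}$; each factor becomes $\bigl((1-x_i)-a_i\bigr)^{-1}=-\bigl(x_i-(1-a_i)\bigr)^{-1}$, producing the global sign $(-1)^n$; and relabelling the variables in reverse order rewrites the domain as a decreasing simplex once more, now with the letters appearing as $1-a_n,1-a_{n-1},\dots,1-a_1$. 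This is the standard reflection $x\mapsto 1-x$ for iterated integrals (the same move that underlies duality of multiple zeta values) and is elementary.

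The one point that needs genuine care is analytic rather than combinatorial. For general $y_j$ the iterated integrals are improper — the form $\omega(a)$ has a simple pole at $x=a$, and for some values of the letters that pole lands on an endpoint of an inner integration — so the change of variables above is literally valid only on a restricted parameter range. I would therefore first prove \eqref{eq:li:itint:2} in a region where both sides converge absolutely (for instance for real $y_j>1$, where the polylogarithm in \eqref{eq:li:itint:1} has all arguments, and all partial products $1/y_m$, strictly inside the unit disc, and none of the poles $0,\,y_j,\,1-y_j,\,1$ lies on a closed integration interval), and then extend to the general case by analytic continuation — equivalently, by reading the integrals as shuffle-regularized limits. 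I expect this analytic bookkeeping, and not the word reversal or the sign $(-1)^n$, to be the only real obstacle.
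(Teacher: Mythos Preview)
Your proposal is correct and follows exactly the approach the paper indicates: the paper's entire justification is the sentence preceding the theorem, ``Reversing the order of integration in \eqref{eq:li:itint:1} and replacing each integration variable $x$ by $1-x$ results in the dual iterated integral representation \eqref{eq:li:itint:2},'' and you have simply carried this out in detail, with the added (and welcome) care about convergence and analytic continuation. There is nothing to change.
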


A famous instance of duality is Euler's formula (see, for instance, \cite{borweinj-2006a})
\begin{align}
  \zeta (2, 1) = l \left(\begin{array}{cc}
     2 & 1\\
     1 & 1
   \end{array} \right) &= \int_0^1 \omega (0) \omega (1) \omega (1)
   \label{eq:zeta21}\\\notag &=
   - \int_0^1 \omega (0) \omega (0) \omega (1) = l \left(\begin{array}{c}
     3\\
     1
   \end{array} \right) = \zeta (3),
\end{align}
where both \eqref{eq:li:itint:1} and \eqref{eq:li:itint:2} have been used to
relate one polylogarithm to its dual.

\begin{proof}[Proof of Theorem~\ref{thm:companion}]
  Similar to \eqref{eq:zeta21}, for the polylogarithms involved in \eqref{eq:S3:Li:only}, we find
  that
  \begin{align*}
    \operatorname{Li}_{1, 2} \left(\tfrac{1}{2}, - 1 \right)
    & = l \left(\begin{array}{cc}
      1 & 2\\
      2 & - 2
    \end{array} \right)
    = \int_0^1 \omega (2) \omega (0) \omega (- 2)\\
    & = - \int_0^1 \omega (3) \omega (1) \omega (- 1)
    = l \left(\begin{array}{ccc}
      1 & 1 & 1\\
      3 & 1 & - 1
    \end{array} \right)
    = \operatorname{Li}_{1, 1, 1} \left(\tfrac{1}{3}, 3, - 1 \right)\\
    & = \sum_{n_1 > n_2 > n_3 \geq 1} \frac{(- 1)^{n_3}}{3^{n_1 -
    n_2}}  \frac{1}{n_1 n_2 n_3},
  \end{align*}
  as well as
  \begin{align*}
    \operatorname{Li}_3 \left(- \tfrac{1}{2} \right)
    & = l \left(\begin{array}{c}
      3\\
      - 2
    \end{array} \right)
    = - \int_0^1 \omega (0)^2 \omega (- 2)\\
    & = \int_0^1 \omega (3) \omega (1)^2 = - l \left(\begin{array}{ccc}
      1 & 1 & 1\\
      3 & 1 & 1
    \end{array} \right)
    = - \operatorname{Li}_{1, 1, 1} \left(\tfrac{1}{3}, 3, 1 \right)\\
    & = - \sum_{n_1 > n_2 > n_3 \geq 1} \frac{1}{3^{n_1 - n_2}} 
    \frac{1}{n_1 n_2 n_3} .
  \end{align*}
  Taken together, we obtain
  \begin{equation*}
    \sum_{n_1 > n_2 > n_3 \geq 1} \frac{1}{3^{n_1 - n_2}}  \frac{1}{n_1
     n_2 n_3} - \sum_{n_1 > n_2 > n_3 \geq 1} \frac{(- 1)^{n_3}}{3^{n_1 -
     n_2}}  \frac{1}{n_1 n_2 n_3} = \frac{13}{24} \zeta (3)
  \end{equation*}
  or, equivalently, \eqref{eq:companion}.
\end{proof}

\section{A related simpler double sum}
\label{sec:double-sum:2}

Ramanujan derived \eqref{eq:S3:rama} by evaluating \cite[Entry~9;
p.~251]{berndtI} the function
\begin{equation*}
  g (z) = \sum_{k = 1}^{\infty} H_k  \frac{z^{k + 1}}{(k + 1)^2},
  \quad\text{where }\;
  H_k = \sum_{j = 1}^k \frac{1}{j},
\end{equation*}
in polylogarithmic terms as
\begin{equation}
  \label{eq:g:Li} g (1 - z) = \tfrac{1}{2} \log^2 (z) \log (1 - z) +
  \Li_2 (z) \log (z) - \Li_3 (z) + \zeta (3),
\end{equation}
and observing that the double sum \eqref{eq:S3:rama} is given by $g (1 / 2) +
\Li_3 (1 / 2)$. We next show that \eqref{eq:S3:plus} can be derived in
an analogous manner.

\begin{lemma}
  For $| z | < 1$, define
  \begin{equation*}
    h (z) = \sum_{k = 1}^{\infty} H_k^{(2)}  \frac{z^{k + 1}}{k + 1},
    \quad\text{where }\;
     H_k^{(2)} = \sum_{j = 1}^k \frac{1}{j^2} .
  \end{equation*}
  Then,
  \begin{equation}
    \label{eq:h:Li} h (1 - z) = 2 \Li_3 (z) - \Li_2 (z) \log (z) -
    \tfrac{\pi^2}{6} \log (z) - 2 \zeta (3) .
  \end{equation}
\end{lemma}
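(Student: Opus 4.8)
The plan is to reduce the claimed identity \eqref{eq:h:Li} to a statement about derivatives. Since $H_k^{(2)}$ is the sequence of partial sums of $1/j^2$, multiplying the series $\Li_2(z)=\sum_{j\ge1}z^j/j^2$ by the geometric series $1/(1-z)$ gives $\sum_{k\ge1}H_k^{(2)}z^k=\Li_2(z)/(1-z)$ for $|z|<1$. Differentiating $h$ term by term (legitimate by local uniform convergence inside the unit disk) therefore yields
\begin{equation*}
  h'(z)=\sum_{k\ge1}H_k^{(2)}z^k=\frac{\Li_2(z)}{1-z},\qquad h(0)=0 .
\end{equation*}
Writing $F(z)$ for the right-hand side of \eqref{eq:h:Li}, it then suffices to verify that $F'(z)=\tfrac{d}{dz}h(1-z)=-h'(1-z)=-\Li_2(1-z)/z$ on $(0,1)$, together with the equality of $F$ and $z\mapsto h(1-z)$ at a single point of this interval.

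For the derivative of $F$ I would use $\tfrac{d}{dz}\Li_3(z)=\Li_2(z)/z$ and $\tfrac{d}{dz}\Li_2(z)=-\log(1-z)/z$, so that $\tfrac{d}{dz}(\Li_2(z)\log z)=\Li_2(z)/z-\log(z)\log(1-z)/z$, and hence
\begin{equation*}
  F'(z)=\frac{\Li_2(z)+\log(z)\log(1-z)-\tfrac{\pi^2}{6}}{z}.
\end{equation*}
By the reflection formula \eqref{eq:Li2:refl} the numerator is precisely $-\Li_2(1-z)$, so $F'(z)=-\Li_2(1-z)/z$, which matches $\tfrac{d}{dz}h(1-z)$. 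Thus $h(1-z)-F(z)$ is constant on $(0,1)$, and letting $z\to1^-$ the term $h(1-z)$ tends to $h(0)=0$, while $\Li_2(z)\log z\to0$ and $\tfrac{\pi^2}{6}\log z\to0$, so that $F(z)\to 2\zeta(3)-2\zeta(3)=0$. The constant is therefore $0$, which proves \eqref{eq:h:Li} on $(0,1)$ (and elsewhere by analytic continuation).

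I do not anticipate a genuine obstacle; the points needing a little care are the justification of term-by-term differentiation, the boundary limit $z\to1^-$ (where the product $\Li_2(z)\log z$ looks indeterminate but vanishes because $\Li_2$ is continuous at $1$), and keeping signs straight when applying \eqref{eq:Li2:refl}. As a cross-check, and an alternative route relying only on results already recorded: integrating $h(z)=\int_0^z\Li_2(t)\,dt/(1-t)$ by parts with $u=\Li_2(t)$ and $dv=dt/(1-t)$ gives $h(z)=-\Li_2(z)\log(1-z)-\int_0^z\log^2(1-t)\,dt/t=-\Li_2(z)\log(1-z)-2g(z)$, with $g$ Ramanujan's function of \eqref{eq:g:Li}; replacing $z$ by $1-z$, substituting \eqref{eq:g:Li}, and simplifying once more with \eqref{eq:Li2:refl} reproduces \eqref{eq:h:Li} directly.
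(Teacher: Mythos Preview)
Your proposal is correct. Your primary argument --- differentiating both sides, invoking only the reflection formula \eqref{eq:Li2:refl} to match the derivatives, and fixing the constant by sending $z\to 1^-$ --- is a genuinely different and more self-contained route than the paper's. The paper proceeds exactly as your ``cross-check'' paragraph does: it integrates $h'(z)=\Li_2(z)/(1-z)$ by parts to obtain $h(z)=-\Li_2(z)\log(1-z)-2g(z)$ and then appeals to Ramanujan's evaluation \eqref{eq:g:Li} together with \eqref{eq:Li2:refl}. Your direct approach has the advantage of not requiring \eqref{eq:g:Li} at all (only the elementary derivative relations for $\Li_2$ and $\Li_3$ plus \eqref{eq:Li2:refl}); the paper's approach, on the other hand, makes the relationship between $h$ and Ramanujan's function $g$ explicit, which is thematically relevant to that section.
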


\begin{proof}
  We note that
  \begin{equation*}
    h' (z) = \sum_{k = 1}^{\infty} H_k^{(2)} z^k = \frac{\Li_2 (z)}{1 -
     z} .
  \end{equation*}
  Integrating by parts, we find
  \begin{equation*}
    h (z) = - \log (1 - z) \Li_2 (z) - \int_0^z \frac{\log^2 (1 -
     t)}{t} \id t = - \log (1 - z) \Li_2 (z) - 2 g (z),
  \end{equation*}
  so that the claim follows from \eqref{eq:g:Li} combined with
  \eqref{eq:Li2:refl}.
\end{proof}

It is now straightforward to deduce \eqref{eq:S3:plus}.

\begin{corollary}\label{cor:S3:plus}
  We have
  \begin{equation*}
    \sum_{k = 1}^{\infty} \sum_{j = k}^{\infty} \frac{1}{k^2} 
     \, \frac{1}{j \, 2^j} = \frac{5}{8} \zeta (3) .
  \end{equation*}
\end{corollary}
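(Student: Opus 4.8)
The plan is to mirror, with the function $h$ in place of Ramanujan's $g$, the derivation of \eqref{eq:S3:rama} that was sketched before \eqref{eq:g:Li}. First I would exchange the order of summation in the double sum, writing
\[
  \sum_{k=1}^{\infty}\sum_{j=k}^{\infty}\frac{1}{k^{2}}\,\frac{1}{j\,2^{j}}
  = \sum_{j=1}^{\infty}\frac{1}{j\,2^{j}}\sum_{k=1}^{j}\frac{1}{k^{2}}
  = \sum_{j=1}^{\infty}\frac{H_{j}^{(2)}}{j\,2^{j}},
\]
so that the quantity to be evaluated is a single generating-function value.

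Next I would express this generating function through $h$. Shifting the index in the definition of $h$ and using $H_{k}^{(2)}=H_{k+1}^{(2)}-\tfrac{1}{(k+1)^{2}}$ gives, for $|z|<1$,
\[
  \sum_{m=1}^{\infty}\frac{H_{m}^{(2)}}{m}\,z^{m} = h(z)+\Li_{3}(z),
\]
all series here being absolutely convergent, so no rearrangement issue arises. Evaluating at $z=\tfrac12$ identifies the double sum with $h(\tfrac12)+\Li_{3}(\tfrac12)$. I would then invoke the Lemma \eqref{eq:h:Li} with $z=\tfrac12$ (note $1-z=\tfrac12$ as well) and $\log(\tfrac12)=-\log 2$, obtaining
\[
  h(\tfrac12) = 2\Li_{3}(\tfrac12)+\Li_{2}(\tfrac12)\log 2+\tfrac{\pi^{2}}{6}\log 2-2\zeta(3),
\]
hence the double sum equals $3\Li_{3}(\tfrac12)+\Li_{2}(\tfrac12)\log 2+\tfrac{\pi^{2}}{6}\log 2-2\zeta(3)$.

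Finally, substituting the closed forms \eqref{eq:Li2:2} and \eqref{eq:Li3:2} for $\Li_{2}(\tfrac12)$ and $\Li_{3}(\tfrac12)$, the $\log^{3}2$ terms cancel, the $\pi^{2}\log 2$ terms cancel, and what remains is $\left(\tfrac{21}{8}-2\right)\zeta(3)=\tfrac58\zeta(3)$, which is \eqref{eq:S3:plus}. The argument is routine once the Lemma is available; the only step demanding a little attention is the index shift relating $h(z)$ to $\sum_{m}H_{m}^{(2)}z^{m}/m$, and I do not anticipate any genuine obstacle, since all of the substance sits in the evaluation of $h(1-z)$, which in turn rests on Ramanujan's evaluation \eqref{eq:g:Li} of $g$.
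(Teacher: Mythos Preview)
Your proposal is correct and follows essentially the same approach as the paper: both identify the double sum with $h(\tfrac12)+\Li_3(\tfrac12)$ (you justify this via the index shift, whereas the paper simply states it), then apply the Lemma \eqref{eq:h:Li} at $z=\tfrac12$ together with the closed forms \eqref{eq:Li2:2} and \eqref{eq:Li3:2}. Your write-up supplies the arithmetic details that the paper leaves to the reader, but the argument is the same.
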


\begin{proof}
  Observe that
  \begin{equation*}
    \sum_{k = 1}^{\infty} \sum_{j = k}^{\infty} \frac{1}{k^2} 
     \, \frac{1}{j \, 2^j} = h \left(\tfrac{1}{2}
     \right) + \Li_3 \left(\tfrac{1}{2} \right),
  \end{equation*}
  so that the claimed evaluation follows from \eqref{eq:h:Li} together with
  \eqref{eq:Li2:2} and \eqref{eq:Li3:2}.
\end{proof}

Let us indicate that similarly approaching the double sum $S_3$ in
Theorem~\ref{thm:S3} is not sufficient to evaluate it. Define, for $| z | <
1$,
\begin{equation*}
  \tilde{h} (z) = \sum_{k = 1}^{\infty} \tilde{H}_k^{(2)}  \frac{z^{k + 1}}{k + 1},
    \quad\text{where }\;
   \tilde{H}_k^{(2)} = \sum_{j = 1}^k \frac{(- 1)^{j - 1}}{j^2},
\end{equation*}
so that
\begin{equation*}
  S_3 = \tilde{h} \left(\tfrac{1}{2} \right) - \Li_3 \left(-
   \tfrac{1}{2} \right) .
\end{equation*}
Observing that
\begin{equation*}
  \tilde{h}' (z) = \sum_{k = 1}^{\infty} \tilde{H}_k^{(2)} z^k = -
   \frac{\Li_2 (- z)}{1 - z},
\end{equation*}
we then find
\begin{equation}
  \label{eq:h1:Li} \tilde{h} (z) = \log (1 - z) \Li_2 (- z) + \int_0^z
  \frac{\log (1 - t) \log (1 + t)}{t} \id t.
\end{equation}
In particular,
\begin{equation}
  \label{eq:S3:h1:Li} S_3 = \int_0^{1 / 2} \frac{\log (1 - t) \log (1 + t)}{t}
  \id t - \Li_3 \left(- \tfrac{1}{2} \right) - \log (2) \Li_2
  \left(- \tfrac{1}{2} \right) .
\end{equation}
The integral in \eqref{eq:h1:Li} can be expressed as a sum of polylogarithms
at various arguments, though the result takes a rather more complicated form than
\eqref{eq:h:Li}. As a consequence, it still requires a considerable amount of
polylogarithmic relations to deduce the simple evaluation $S_3 =
\tfrac{13}{24} \zeta (3)$ from \eqref{eq:S3:h1:Li}.

\section{Computer algebraic approaches}
\label{sec:computer}

We indicated in Remark~\ref{rk:hyperint} that (period) integrals such as
\begin{equation}\label{eq:Z3:cas}
  Z_{3} = \int_{1}^{\infty} \int_{0}^{1} \int_{0}^{1} \frac{dz \, dy \, dx}{x(x+y)(x+y+z)}
\end{equation}
can be algorithmically evaluated in terms of
multiple polylogarithms. These evaluations, however, are not typically
in simplified form and establishing the necessary relations between special values of
polylogarithms can be (sometimes prohibitively) difficult. For instance,
Theorem~\ref{thm:S3} is equivalent to the polylogarithmic relation
\begin{equation}
  \label{eq:S3:Li:0} - \Li_{1, 2} \left(\tfrac{1}{2}, - 1 \right) -
  \Li_3 \left(- \tfrac{1}{2} \right) = \tfrac{13}{24} \zeta (3) .
\end{equation}
We note that $- \Li_{1, 2} (x, - 1) = H (1, - 2 ; x)$ is an instance of
the harmonic polylogarithm introduced in \cite{rm-hpl}. A Mathematica
implementation of algorithms for working with and simplifying harmonic
polylogarithms is provided by Maitre \cite{hpl}. For instance, it is
possible to algorithmically convert the harmonic polylogarithm $H (1, - 2 ;
x)$ to trilogarithms $\Li_3 (x)$ and lower order terms:
\begin{align*}
  H (1, - 2 ; x)
  & = \Li_3 (- x) - \Li_3 (1 - x) + \Li_3 \left(\tfrac{x}{1 + x} \right) + \Li_3 \left(\tfrac{1 - x}{2} \right)\\
  & + \Li_3 \left(\tfrac{1 + x}{2} \right) - \Li_3 \left(\tfrac{2 x}{x - 1} \right) - \Li_3 \left(\tfrac{2 x}{1 + x} \right) - \Li_2 (x) \log (x + 1)\\
  & + \tfrac{1}{6} \log \left(\tfrac{1 - x}{8} \right) \log^2 (1 - x) - \tfrac{1}{2} \log (x) \log^2 (1 - x)\\
  & + \tfrac{\log^2 (2)}{2} \log (1 - x^2) - \tfrac{\log (2)}{2} \log^2 (1 + x) + \tfrac{\pi^2}{12} \log \left(\tfrac{1 - x}{1 + x} \right)\\
  & - \tfrac{3}{4} \zeta (3) - \tfrac{\log^3 (2)}{3} + \tfrac{\pi^2}{6}
  \log (2) .
\end{align*}
With a bit of human effort, we can then prove \eqref{eq:S3:Li:0} by employing
relations for the tri- and dilogarithm including those mentioned in
Section~\ref{sec:polylog}.

The question of whether an integral like \eqref{eq:Z3:cas}
can be automatically evaluated by a general purpose computer algebra system evolves in time.
For instance, given \eqref{eq:Z3:cas}, \textit{Maple} 18 evaluates two of three integrals and returns a single integral similar to \eqref{eq:Z3:int:Li}.
On the other hand, \textit{Mathematica} 9 returns the integral \eqref{eq:Z3:cas} unevaluated, while \textit{Mathematica} 10 produces
\begin{align*}
  Z_3 & = \Li_3 \left(- \tfrac{1}{3} \right) - 2 \Li_3 \left(\tfrac{1}{3} \right) + \tfrac{19}{8} \zeta (3) + \tfrac{1}{2} \log (3) \Li_2 \left(\tfrac{1}{9} \right) - 3 \log (3) \Li_2 \left(\tfrac{1}{3} \right)\\
  &\quad - \tfrac{1}{3} \log^3 (3) + \pi i \left[ \tfrac{1}{2} \Li_2 \left(\tfrac{1}{9} \right) - 3 \Li_2 \left(\tfrac{1}{3} \right) - \tfrac{1}{2} \log^2 (3) + \tfrac{\pi^2}{6} \right] .
\end{align*}
Assuming that this calculation is correct, and observing that $Z_{3}$ is real, we can conclude the relation
\begin{equation}\label{eq:Li2:9}
  \Li_{2} \left( \tfrac{1}{9} \right) = 6 \Li_{2} \left( \tfrac{1}{3} \right) + \log^{2}(3) - \tfrac{\pi^2}{3},
\end{equation}
which follows from \eqref{eq:Li:sign} combined with \eqref{eq:Li2:3} (see also \cite[(39.4); p.~324]{berndtIV}).
Using \eqref{eq:Li2:9}, the expression for the triple integral reduces to 
\begin{equation}\label{value-z3}
  Z_3 = \Li_3 \left(- \tfrac{1}{3} \right) - 2 \Li_3 \left(\tfrac{1}{3} \right) + \tfrac{19}{8} \zeta (3) - \tfrac{\pi^2}{6} \log (3) + \tfrac{1}{6} \log^3 (3).
\end{equation}
The remaining polylogarithms can be simplified using \eqref{eq:Li3:3}, which results in the desired evaluation
\begin{equation}
  Z_{3} = \tfrac{5}{24} \zeta(3),
\end{equation}
which we established in Theorem~\ref{thm:Z3}.

It is an interesting phenomenon that symmetrizing a problem can occasionally make it considerably more tractable by computer algebra.  An impressive instance is \cite{paule-rr}, where Paule significantly reduces the order of recurrences for certain $q$-sums by ``creative symmetrizing''.
In the remainder of this section, we indicate that a symmetrizing transformation also makes the integral $Z_3$ more palatable for symbolic evaluation.
To begin with, the change of variables $x \mapsto 1/x$ transforms the integral \eqref{eq:Z3:cas} to the unit cube:
\begin{equation}\label{triple-1a}
  Z_{3} = \int_{0}^{1} \int_{0}^{1} \int_{0}^{1} \frac{x \, dx \, dy \, dz}{(1+xy)(1+ x(y+z))}.
\end{equation}
Denote the integrand of \eqref{triple-1a} by $u(x,y,z)$.
The symmetrization of the integral is defined in terms of 
\begin{equation}
  u^{\operatorname{sym}}(x,y,z) = \frac{1}{3} \left[ u(x,y,z) + u(y,z,x) + u(z,x,y) \right]
\end{equation}
as 
\begin{equation}\label{eq:Z3:sym}
  Z_{3} = \int_0^1 \int_{0}^{1} \int_{0}^{1} u^{\operatorname{sym}}(x,y,z) \, dx \, dy \, dz.
\end{equation}           
This form seems to be more favorable to a symbolic calculation. Indeed, \textit{Mathematica} is able to directly evaluate the symmetrized integral \eqref{eq:Z3:sym} as 
\begin{equation}
  Z_{3} = \tfrac{5}{24} \zeta(3) - \tfrac{\pi i}{6} 
  \left[ \Li_{2} \left( \tfrac{1}{9} \right) - 6 \Li_{2} \left( \tfrac{1}{3} \right) - \log^{2}(3) + \tfrac{\pi^2}{3} \right].
\end{equation}
Identity \eqref{eq:Li2:9}, or simply observing that \eqref{eq:Z3:sym} is real, then gives the value $Z_{3} = \tfrac{5}{24}\zeta(3)$ of Theorem~\ref{thm:Z3}.

\section{Conclusions}
\label{sec:conclusions}

We discussed several approaches to the triple integral \eqref{eq:Z3} as well as the equivalent double sum \eqref{eq:S3}.
However, in each case, we eventually required the introduction of polylogarithms and nontrivial relations between them.
It remains an interesting challenge to relate either of \eqref{eq:Z3} and \eqref{eq:S3} to $\zeta(3)$ directly.

In another direction, let us note that the unit square integral for $\frac12 \zeta(2)$ in Remark~\ref{rk:Z2}, a $2$-dimensional analog of the integral \eqref{eq:Z3},
has other simpler higher-dimensional generalizations.
Indeed, for any $m \ge 2$, we have the following integrals evaluating in terms of $\zeta(m)$:
\begin{equation*}
  \int_0^1 \cdots \int_0^1 \frac{\id x_1 \cdots \id x_m}{1 - x_1 \cdots
   x_m} = \zeta (m), \quad \int_0^1 \cdots \int_0^1 \frac{\id x_1 \cdots
   \id x_m}{1 + x_1 \cdots x_m} = (1 - 2^{1 - m}) \zeta (m) .
\end{equation*}
In particular,
\begin{equation*}
  \int_0^1 \int_0^1 \int_0^1 \frac{\id x \id y \id z}{1 - x y z} =
   \zeta (3), \quad \int_0^1 \int_0^1 \int_0^1 \frac{\id x \id y \id
   z}{1 + x y z} = \frac{3}{4} \zeta (3) .
\end{equation*}
Other interesting triple integrals involving $\zeta(3)$ have been considered in the literature.
For instance, in his proof, inspired by Ap\'ery, of the irrationality
of $\zeta (3)$, Beukers \cite{beukers1} considers the integrals
\begin{equation*}
  J_n = \frac{1}{2} \int_0^1 \int_0^1 \int_0^1 \frac{x^n (1 - x)^n y^n (1 -
   y)^n z^n (1 - z)^n}{(1 - (1 - x y) z)^{n + 1}} \id x \id y \id z
\end{equation*}
and shows that
\begin{equation*}
  J_n = A (n) \zeta (3) + B (n),
\end{equation*}
where
\begin{equation*}
  A (n) = \sum_{k = 0}^n \binom{n}{k}^2 \binom{n + k}{k}^2
\end{equation*}
are the Ap\'ery numbers and $B (n)$ are certain rational numbers (satisfying the
same three-term recurrence as the Ap\'ery numbers). For instance,
\begin{align*}
  J_0 & = \frac{1}{2} \int_0^1 \int_0^1 \int_0^1 \frac{1}{1 - (1 - x y) z}
  \id x \id y \id z = \zeta (3),\\
  J_1 & = \frac{1}{2} \int_0^1 \int_0^1 \int_0^1 \frac{x (1 - x) y (1 - y) z
  (1 - z)}{(1 - (1 - x y) z)^2} \id x \id y \id z = 5 \zeta (3) - 6.
\end{align*}
Beukers \cite{beukers1} shows that $ d_n^3 J_n = a_n \zeta (3) + b_n $, with $ d_n = \operatorname{lcm}(1,2,\ldots,n) $, are integer linear combinations
of $\zeta(3)$ and $1$ (that is, $a_n, b_n \in \mathbb{Z}$).
He then deduces the irrationality of $\zeta(3)$ from the bounds $ 0 < |a_n \zeta (3) + b_n| < (4/5)^n $, which hold for large enough $n$.

Recently, Brown \cite{brown-apery} introduced cellular integrals, generalizing Beukers' integrals $J_n$,
and showed that these cellular integrals are special linear forms in multiple zeta values, which reproduce (and vastly generalize)
many of the known constructions related to irrationality questions for zeta values.
We close by wondering whether the triple integral \eqref{eq:Z3} can be similarly embedded in an infinite family of linear forms in $\zeta(3)$ and $1$.

\bigskip

\small

\end{document}